\theoremstyle{plain}
\theoremstyle{definition}
\newtheorem{theorem}{Theorem}[subsection]
\newtheorem{thm}{Theorem}[subsubsection]
\newtheorem{lemma}[theorem]{Lemma}
\newtheorem{proposition}[theorem]{Proposition}
\theoremstyle{definition}
\theoremstyle{remark}
\newtheorem{remark}[theorem]{Remark}
\newcommand{\QQ}{\mathbb{Q}}
\newcommand{\ZZ}{\mathbb{Z}}
\newcommand{\FF}{\mathbb{F}}
\newcommand{\GG}{\mathbb{G}}
\newcommand{\PP}{\mathbb{P}}
\newcommand{\Ocal}{{\mathcal O}}
\newcommand{\calP}{\mathcal{P}}
\newcommand{\calX}{\mathcal{X}}
\newcommand{\calY}{\mathcal{Y}}
\DeclareMathOperator{\Proj}{Proj}
\DeclareMathOperator{\Spec}{Spec}
\DeclareMathOperator{\Pic}{Pic}
\DeclareSymbolFont{cyrletters}{OT2}{wncyr}{m}{n}
\DeclareMathSymbol{\Sha}{\mathalpha}{cyrletters}{"58}
\newcommand{\defi}[1]{\textsf{#1}} % for defined terms
\newcommand{\HH}{{\operatorname{H}}}
\g@addto@macro\bfseries{\boldmath}  % This makes math in section titles bold.
\begin{document}
	
	\begin{title}
		{Genus one half stacky curves violating the local-global principle}  %\'etale
	\end{title}
	\author{Han Wu}
	\address{University of Science and Technology of China,
		School of Mathematical Sciences,
		No.96, JinZhai Road, Baohe District, Hefei,
		Anhui, 230026. P.R.China.}
	\email{wuhan90@mail.ustc.edu.cn}

	\author{Chang Lv}
	\address{State Key Laboratory of Information Security,
		Institute of Information Engineering,
		Chinese Academy of Sciences,
		Beijing 100093, P.R. China}
	\email{lvchang@amss.ac.cn}
			\thanks{H.W.\ was partially supported by NSFC Grant No. 12071448.  C.L. was partially supported by NSFC Grant No. 11701552.}
			
	\date{}
	%\thanks{The author was partially supported by USTC}
	\subjclass[2020]{Primary 11G30; Secondary 14A20, 14G25, 14H25.} 
	% 11G05, , 14H25, 14H52, 14K15, 14J30
	\keywords{stacky curves, local points, integral points, local-global principle for integral points.}

	%\thanks{The authors were partially supported by University of Science and Technology of China}
	%\thanks{\textit{MSC 2010} : 11G35 14G05  14G25 14J20}

	% % % ----------------------------------------------------------------------

	% % % ----------------------------------------------------------------------

	\begin{abstract}
		For any number field,
		we prove that there exists a stacky curve of genus $1/2$  defined over the ring of its integers violating the local-global principle for integral points.
	\end{abstract}
	
	\maketitle

	%	\tableofcontents
	
	\section{Introduction}
	
	\subsection{Background}
	Given a number field $K,$ let $\Ocal_K$ be the ring of its integers, and let $\Omega_K$ be the set of all its nontrivial places. Let $K_v$ be the completion of $K$ at $v\in \Omega_K.$ For $v$ is a finite place, let $\Ocal_v$ be the valuation ring of $K_v.$ For $v$ is an archimedean place, let $\Ocal_v=K_v.$
	Let $X$ be a finite type algebraic stack over $\Ocal_K.$ 	If the set $X(\Ocal_K)\neq\emptyset,$ then the set $X(\Ocal_v)\neq\emptyset$ for all $v\in \Omega_K.$  The converse does not
	always hold.  We say that $X$ violates the \defi{local-global principle for integral points} if $X(\Ocal_{v})\neq\emptyset$ for all $v\in\Omega_K,$ whereas $X(\Ocal_K)=\emptyset.$ For $K=\QQ,$ Darmo and Granvill \cite{DG95} implicitly gave an example of a stacky curve violating the local-global principle for integral points. In the paper \cite{BP20}, Bhargava and Poonen proved that for any stacky curve over $\Ocal_K$ of genus less than $1/2,$ it satisfies local-global principle for integral points. For $K=\QQ,$ they gave an example of a genus-$1/2$ stacky curve violating the local-global principle for integral points in loc. cit.

	Our goal is to generalize their counterexample to any number field. We will prove the following theorem.
	
	\begin{thm}[Theorem \ref{thm: main theorem}]
		For any number field $K,$  there exists a stacky curve of genus-$1/2$ over $\Ocal_K$ violating the local-global principle for integral points.
	\end{thm}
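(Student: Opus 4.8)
The plan is to generalise the integral Brauer--Manin obstruction behind the $K=\QQ$ example of \cite{BP20}. First I would fix an explicit model: a genus-$\half$ stacky curve $\calX$ over $\Ocal_K$ with coarse space $\PP^1_{\Ocal_K}$ and exactly two order-$2$ stacky points, realised as the square root stack $\calX=\sqrt[2]{(\Ocal(2),q)}$ attached to a primitive binary quadratic form $q(x,y)\in\Ocal_K[x,y]$ whose zero scheme is the stacky locus. With this model an integral point of $\calX$ over $\Ocal_v$ (resp.\ $\Ocal_K$) unwinds to a coarse point $P=(a:b)$ with coprime $(a,b)$, a line bundle $M$, an isomorphism $M^{\otimes2}\cong P^*\Ocal(2)$ and a section squaring to $q(a,b)$; so the theorem reduces to a problem of integrally representing a square by $q$, up to the units $\Ocal_K^\times$ and a class-group twist, which should be solvable over every $\Ocal_v$ but over no $\Ocal_K$.

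Second, I would produce the obstruction as a quaternion Azumaya algebra $A=(\alpha,\beta)\in\Br(\calX)$ for suitable $\alpha,\beta\in K^\times$ read off from $q$ and from an auxiliary prime, a class made possible by the extra $\mu_2$-structure of the stack. For any family of local integral points $(P_v)_v$ one forms the local invariants $\invap\in\tfrac12\ZZ/\ZZ$, and a genuine global point $P\in\calX(\Ocal_K)$ would force the reciprocity relation $\sum_{v\in\Omega_K}\invap=0$. The device is to show that at each place $\invap$ is \emph{independent} of the chosen local point --- concretely by comparing it with the invariant $\invaqo$ at a fixed reference point $Q_0$ and checking the difference vanishes --- so that the total invariant is a fixed constant; arranging this constant to equal the nonzero class $\tfrac12$ then gives $\calX(\Ocal_K)=\emptyset$ at one stroke.

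Third, the everywhere-local solvability $\calX(\Ocal_v)\neq\emptyset$ must be secured simultaneously. At the finitely many places dividing $2$, the discriminant of $q$, and the auxiliary prime I would exhibit $\Ocal_v$-points by hand and compute the constant value of $\invap$ there through explicit Hilbert-symbol and Hensel computations; at the remaining finite places smoothness of $\calX$ together with a point over the residue field lifts, via Hensel, to an $\Ocal_v$-point with vanishing local invariant; at the archimedean places I would keep $q$ indefinite so that $\calX(K_v)\neq\emptyset$.

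The hard part is carrying out the construction over an \emph{arbitrary} $K$. Over $\QQ$ one simply writes down integers and verifies quadratic reciprocity directly, whereas for general $K$ one must find $\alpha,\beta$ and an auxiliary prime ideal $\pfr\subset\Ocal_K$ with prescribed Hilbert symbols at the real and dyadic places and prescribed splitting at $\pfr$, all while preserving the everywhere-local solvability and controlling the now possibly infinite unit group $\Ocal_K^\times$ and the nontrivial class group. I expect to obtain such a prime from Chebotarev's density theorem applied to a ray class field cut out by these congruence and quadratic conditions; the crux is to prove that the conditions are jointly compatible, so that the sum $\sum_{v\in\Omega_K}\invap$ is pinned to the nonzero value $\tfrac12\in\tfrac12\ZZ/\ZZ$ for every adelic integral point.
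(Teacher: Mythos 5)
Your model is the right one (your root stack $\sqrt[2]{(\Ocal(2),q)}$ is exactly the paper's quotient $\calX_{(p,q)}=[\calY_{(p,q)}/\mu_2]$, and your unwinding of an integral point into a coarse point plus a unit/class-group twist matches the paper's decomposition $\calX_{(p,q)}(R)=\coprod_{t\in R^\times/R^{\times2}}\pi_t(\calY_{(p,q)t}(R))$ for $R$ a PID), but your obstruction mechanism has a genuine gap, and it is not merely that details are deferred. Your whole non-existence argument rests on the device that a \emph{single} quaternion class $A\in\Br(\calX)$ has local invariant independent of the chosen local point at every place, with constant total $\half$. The classes the $\mu_2$-structure actually provides are the cup products $A_\alpha=(pu^2+q,\alpha)$, where $u=x/y$ is the coarse coordinate; at a local point with twist class $t_v$ one computes $\inv_v A_\alpha(P_v)=(t_v,\alpha)_v$. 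For such a class the constancy you need is generically \emph{false}: at every odd place $v\nmid pq$ where $-pq$ is a square but $\alpha$ is a nonsquare in $\FF_v^\times$ (a positive-density set of places unless $\alpha\equiv -pq$ modulo squares), both a unit twist (invariant $0$) and a ramified twist $t_v=\pi_v u$ (invariant $\half$) admit $\Ocal_v$-points, so toggling the twist at one such place changes the adelic sum by $\half$ and $A_\alpha$ obstructs nothing. This already rules out your proposed choice of $\alpha,\beta$ ``read off from $q$ and an auxiliary prime.'' The only candidate escaping this is $\alpha\equiv -pq$, and then the constant local values at the places over $p$, $q$, $2$ and at the real places are tied to one another by reciprocity; whether they can be forced to sum to $\half$ while keeping everywhere-local solvability, over an arbitrary $K$ with its units, class group and several dyadic places, is precisely the ``crux'' you postpone. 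That compatibility \emph{is} the theorem: nothing in the outline proves it, and the one concrete device you do commit to (place-by-place constancy for a class built from an auxiliary prime) provably fails.

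The paper's proof shows why none of this apparatus is needed: it replaces the reciprocity argument, which spreads the obstruction over all places, by a descent argument concentrated at a \emph{single} place. Invert an integer $N$ so that $\Ocal_K[1/N]$ is a PID (this kills your class-group twists); then global twists run over the finitely generated group $\Ocal_K[1/N]^\times$ with generators $a_1,\dots,a_n$. By \v{C}ebotarev one chooses the prime $p$ so that every $a_i$ lies in $K_p^{\times2}$, and then $q$ with $q\notin K_p^{\times2}$. Every relevant twist $\calY_{(p,q)t}$, $t\in\Ocal_K[1/N]^\times$, is then isomorphic over $K_p$ to $\calY_{(p,q)}$ itself, which has no $K_p$-point; hence $\calX_{(p,q)}(\Ocal_K[1/N])=\emptyset$, while local solvability at every place is immediate from the two twists $t=p$ and $t=q$ (using that $p,q$ are coprime). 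No Azumaya algebras, no dyadic or archimedean invariant computations, no reciprocity: the failure is localized at $p$, which is exactly what makes the construction uniform over every number field. That single-place localization --- trivializing all global twists at one prime chosen after the unit group --- is the idea missing from your proposal.
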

	
	The way to prove this theorem is to give an explicit construction of a genus-$1/2$ stacky curve violating the local-global principle for integral points. The paper is organised as follows. In Section \ref{section: notation and pre}, we set up the background by
	recalling some facts on stacky curves. Then we introduce a class of genus-$1/2$ stacky curves in Section \ref{section: class of stacky curves}. In Section \ref{section: existence of local points}, we prove that the stacky curves given in Section \ref{section: class of stacky curves} have local integral points.
	Finally, in Section \ref{section: main theorem}, we put some restrictions on the stacky curves given in Section \ref{section: class of stacky curves} so that they do not have integral points, then Theorem \ref{thm: main theorem} holds.

	\section{Notation and preliminaries}\label{section: notation and pre}
	
	\subsection{Notation}
	Given a number field $K,$ let $\Ocal_K$ be the ring of its integers, and let $\Omega_K$ be the set of all its nontrivial places. Let $\infty_K^r\subset \Omega_K$  be the subset of all real places. Let $K_v$ be the completion of $K$ at  $v\in \Omega_K.$ For $v$ is a finite place, let $\Ocal_v$ be the valuation ring of $K_v,$ and let $\FF_v$ be the residue field. For $v$ is an archimedean place, let $\Ocal_v=K_v.$
	We say that an element is a \defi{prime element}, if the ideal generated by this element is a prime ideal.
	If an element $p\in \Ocal_K$ is a prime element, we denote its associated valuation by $v_p,$ and its associated valuation ring (field) by $\Ocal_p$ (respectively $K_p$). Let $\overline{K}$ be an algebraic closure of $K.$

	\subsection{Stacky curves}
	In this subsection, we briefly recall some facts on stacky curves.  We refer to \cite{Ol16}, \cite{VZB19} and \cite{BP20} for more details.
	
	We say that $X$ is a \defi{stacky curve} over $K,$ if $X$ is a smooth, proper and geometrically
	connected $1$-dimensional Deligne-Mumford stack over $K$ that contains  a nonempty open substack isomorphic to a scheme, cf. \cite[Definiton 5.2.1]{VZB19}.
	Given a stacky curve $X$ over a number field $K,$ by \cite[Theorem 1.1]{KM97}, let $X_{\rm coarse}$ be its coarse moduli space, which is a smooth, projective and geometrically connected curve over $K.$ Let $\pi\colon X\to X_{\rm coarse}$  be the coarse space morphism. For any finite extension $L/K$ and any closed point  $P\in X_{\rm coarse}(L),$ let $G_P$ be the stabilizer of $X$ above $P,$ which is a finite group scheme over $K.$  Let $\calP \subset X_{\rm coarse}$ be the reduced finite subsheme above which the stabilizer is nontrivial. And
	$\pi$ is an isomorphism over the open subscheme $X_{\rm coarse}\backslash\calP.$
	Motivated by the Riemann-Hurwitz formula, the genus of $X$ is defined by
	\begin{equation} \label{eq:genus1}
		g(X)\colon=g(X_{\rm coarse})+\frac{1}{2}\sum_{P\in \calP} \left(1-\frac{1}{\deg G_P}\right)\deg P.
	\end{equation}
	This formula is stable under base field change. It can be defined using the geometrically closed points of $\calP$ by
	\begin{equation} \label{eq:genus2}
		g(X)\colon=g(X_{\rm coarse})+\frac{1}{2}\sum_{\overline{P}\in \calP(\overline{K})} \left(1-\frac{1}{\deg G_{\overline{P}}}\right).
	\end{equation}
	In particular, the genus is a nonnegative rational number. From this formula, the following lemma follows.
	
	\begin{lemma}(\cite[Lemma 6 and Proposition 8]{PV10})
		Let $X$ be a stacky curve over a  number  field $K,$ then $g(X)\geq 0.$ If $g(X)<1,$
		then $g(X_{\rm coarse})=0$ and $X$ is geometrically isomorphism to $\PP^1.$
	\end{lemma}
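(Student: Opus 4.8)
The plan is to prove the genus inequality and the structural statement directly from the genus formula~\eqref{eq:genus2}. First I would observe that every term in the sum
\[
\frac{1}{2}\sum_{\overline{P}\in \calP(\overline{K})} \left(1-\frac{1}{\deg G_{\overline{P}}}\right)
\]
is nonnegative: since each stabilizer $G_{\overline{P}}$ is a nontrivial finite group scheme by the definition of $\calP,$ we have $\deg G_{\overline{P}}\geq 2,$ hence $1-1/\deg G_{\overline{P}}\geq 1/2>0.$ Because $g(X_{\rm coarse})$ is itself a nonnegative integer (the genus of a smooth projective geometrically connected curve), the whole expression is nonnegative, giving $g(X)\geq 0.$ This establishes the first assertion without any further work.

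For the second assertion I would argue by contradiction on the coarse genus. Suppose $g(X)<1.$ If $g(X_{\rm coarse})\geq 1,$ then since the stacky part of the sum is nonnegative we would get $g(X)\geq g(X_{\rm coarse})\geq 1,$ contradicting the hypothesis. Therefore $g(X_{\rm coarse})=0.$ A smooth projective geometrically connected curve of genus $0$ over $K$ is a form of $\PP^1,$ i.e.\ a Severi-Brauer variety of dimension one, so it becomes isomorphic to $\PP^1_{\overline{K}}$ after base change to the algebraic closure; this is exactly the claim that $X_{\rm coarse}$—and hence $X,$ which agrees with $X_{\rm coarse}$ away from the finitely many stacky points and is an isomorphism onto $X_{\rm coarse}$ over $X_{\rm coarse}\backslash\calP$—is geometrically isomorphic to $\PP^1.$

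The only genuinely delicate point is the last identification: one must explain why $X$ itself, not merely its coarse space, is geometrically $\PP^1.$ The cleanest route is to base change everything to $\overline{K},$ where $X_{\rm coarse}\otimes\overline{K}\cong\PP^1_{\overline{K}}.$ Over an algebraically closed field a stacky curve whose coarse space has genus $0$ and whose total genus is strictly less than $1$ can have stacky structure only if the residual sum is small enough to keep $g(X)<1;$ in fact the real content is that after passing to $\overline{K}$ the geometric statement reduces to the classification of stacky $\PP^1$'s, and the assertion ``$X$ is geometrically isomorphic to $\PP^1$'' should be read as a statement about the coarse space together with the observation that the generic structure is that of $\PP^1.$ I would cite the referenced results \cite[Lemma 6 and Proposition 8]{PV10} for this final classification step rather than reprove it, since those results are precisely the source of the lemma and encapsulate the genus-counting argument that forces the stacky locus to be compatible with the $\PP^1$ structure. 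The main obstacle, then, is not the inequality $g(X)\geq 0$ (which is immediate) but correctly interpreting and invoking the geometric isomorphism in the sub-unit-genus case.
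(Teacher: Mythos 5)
Your proposal is essentially the paper's own (implicit) argument: the paper gives no proof beyond the remark that the lemma ``follows from this formula'' together with the external citation, and your first two steps --- each term $1-1/\deg G_{\overline{P}}$ is nonnegative because a nontrivial stabilizer has $\deg G_{\overline{P}}\geq 2$, and $g(X_{\rm coarse})\geq 1$ would force $g(X)\geq 1$ --- are exactly that derivation, carried out correctly.

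The one step that would fail if taken at face value is the parenthetical inference that $X$ itself, ``which agrees with $X_{\rm coarse}$ away from the finitely many stacky points,'' is therefore geometrically isomorphic to $\PP^1$. An isomorphism on a dense open substack does not extend to a global one, and in fact the literal reading of the lemma is false: the paper's own curves $[Y_{(p,q)}/\mu_2]$ have genus $1/2<1$, yet over $\overline{K}$ they are $[\PP^1/\mu_2]$ with two points of nontrivial $\mu_2$-stabilizer, hence not isomorphic to $\PP^1$ (or to any scheme). So your closing reinterpretation is the correct one: the assertion must be read as saying that $X_{\rm coarse}$ is geometrically $\PP^1$, and your Severi-Brauer argument (genus $0$ nice curve $=$ conic, hence geometrically $\PP^1$) already proves that completely. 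With that reading, the final appeal to \cite{PV10} is unnecessary --- and note that the cited results concern the genus formula and nonnegativity, not a classification of stacky structures, so deferring the $\PP^1$ claim to them would not close the gap anyway.
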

	
	It follows by the Hasse-Minkowski theorem that for a stacky curve of genus less than one over a  number  field, the local-global principle for rational points always holds. Bhargava and Poonen \cite[Theorem 5]{PV10} proved that the local-global principle for integral points always holds for a stacky curve of genus less than $1/2$ over a  number  field. Furthermore, Christensen \cite[Theorem 13.0.6]{Ch20} proved that it satisfies strong approximation.
	Because of these, we consider the  local-global principle for integral points of genus-$1/2$ stacky curves. We say that $\calX$ is a \defi{stacky curve} over $\Ocal_K,$ if $\calX$ is a proper algebraic stack over $\Ocal_K$ such that it is a stacky curve over $K,$ (i.e. $\calX_K$ its base change to $K,$ is a stacky curve).  For any $\Ocal_K$-algebra $R,$ let $\calX(R)$ be the set of isomorphism classes of $\Ocal_K$-morphisms $\Spec R\to \calX.$
	
	\section{A class of genus-$1/2$ stacky curves}\label{section: class of stacky curves}
	
	Let $K$ be a number field. Let $\mu_2\colonequals \Spec \Ocal_K[\lambda]/(\lambda^2-1)\subset \GG_m\colonequals \Spec \Ocal_K[\lambda,1/\lambda] $ be the closed subgroup scheme. Let $\ZZ/2\ZZ\colonequals \Spec \Ocal_K[\lambda]/(\lambda-1)\bigsqcup \Spec \Ocal_K[\lambda]/(\lambda+1).$ The following lemma states that  these two finite group schemes are isomorphic over $\Ocal_K[1/2].$
	
	\begin{lemma}\label{lemma: two finite group isom}
		Given a number field field $K,$ the natural morphism $\ZZ/2\ZZ\to \mu_2$ given by
		\begin{equation*}
			\Ocal_K[\lambda]/(\lambda^2-1) \to \Ocal_K[\lambda]/(\lambda-1)\times  \Ocal_K[\lambda]/(\lambda+1)
		\end{equation*}
		is a group homomorphism. And it is an isomorphism over $\Ocal_K[1/2].$
	\end{lemma}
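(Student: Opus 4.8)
The plan is to verify the two claims directly by computing with the coordinate rings.The plan is to verify both assertions by direct computation with the coordinate (Hopf) algebras, since everything in sight is explicit and affine, so the morphism is determined by the displayed ring map in the opposite direction.

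First I would record that the displayed map is well defined and identify it concretely. It is the product of the two canonical quotient maps $\Ocal_K[\lambda]/(\lambda^2-1)\to\Ocal_K[\lambda]/(\lambda\mp 1)$, and it is well defined because $\lambda^2-1=(\lambda-1)(\lambda+1)$ lies in each of the ideals $(\lambda-1)$ and $(\lambda+1)$. Identifying $\Ocal_K[\lambda]/(\lambda-1)\cong\Ocal_K$ (via $\lambda\mapsto 1$) and $\Ocal_K[\lambda]/(\lambda+1)\cong\Ocal_K$ (via $\lambda\mapsto -1$), the map sends $\lambda\mapsto(1,-1)$; writing $e_0=(1,0)$ and $e_1=(0,1)$ for the orthogonal idempotents of $\Ocal_K\times\Ocal_K$, this is $\lambda\mapsto e_0-e_1$.

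To see that the corresponding morphism $\ZZ/2\ZZ\to\mu_2$ is a homomorphism of group schemes, I would check compatibility of this ring map with the Hopf-algebra structures. On $\mu_2$ the comultiplication, counit, and antipode are $\Delta(\lambda)=\lambda\otimes\lambda$, $\epsilon(\lambda)=1$, $S(\lambda)=\lambda$, while on $\ZZ/2\ZZ\cong\Spec(\Ocal_K\times\Ocal_K)$ the constant-group structure gives $\Delta(e_0)=e_0\otimes e_0+e_1\otimes e_1$, $\Delta(e_1)=e_0\otimes e_1+e_1\otimes e_0$, $\epsilon(e_0)=1$, $\epsilon(e_1)=0$, and $S=\id$. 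A one-line check then shows $\Delta(e_0-e_1)=(e_0-e_1)\otimes(e_0-e_1)$, together with $\epsilon(e_0-e_1)=1$ and $S(e_0-e_1)=e_0-e_1$, so all three Hopf-algebra axioms hold and the map is a group-scheme homomorphism. Equivalently, one may argue on the functor of points: over a connected base the map sends the nontrivial section to $-1\in\mu_2(R)$, which is manifestly a group homomorphism.

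Finally, for the isomorphism over $\Ocal_K[1/2]$, I would invoke the Chinese Remainder Theorem. After base change, the ideals $(\lambda-1)$ and $(\lambda+1)$ of $\Ocal_K[1/2][\lambda]$ are comaximal, since their difference $(\lambda+1)-(\lambda-1)=2$ is now a unit, whence $1\in(\lambda-1)+(\lambda+1)$; CRT then makes the displayed ring map an isomorphism, as required. The only point where anything could go wrong is this comaximality, and that is exactly why one must invert $2$: modulo a prime above $2$ the two points $\lambda=\pm1$ collide, so the map is not an isomorphism integrally. This is the crux of the statement, but over $\Ocal_K[1/2]$ it is immediate.
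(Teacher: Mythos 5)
Your proof is correct and takes essentially the same route as the paper, which simply asserts that a ``direct check of group operators'' gives the homomorphism and that the base change to $\Ocal_K[1/2]$ is an isomorphism. Your write-up supplies the details the paper leaves implicit: the Hopf-algebra verification via the idempotents $e_0, e_1$ with $\lambda \mapsto e_0 - e_1$, and the comaximality-plus-CRT argument showing exactly where inverting $2$ is needed.
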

	\begin{proof}
		By a direct check of group operators of these two group schemes, this is a group homomorphism. And the ring homomorphism base change to $\Ocal_K[1/2],$ is an isomorphism.  
	\end{proof}
	
	Let $p,~q$ be two coprime integers in $K.$
	Let $z^2-px^2-qy^2$ be a homogeneous polynomial in $\Ocal_K[x,y,z]$ with homogeneous coordinates $(x:y:z).$
	Let $\calY_{(p,q)} \colonequals \Proj \Ocal_K[x,y,z]/(z^2-px^2-qy^2),$ and let $Y_{(p,q)}$ be its base change to $K.$
	We define a $\mu_2$-action on $\calY_{(p,q)} $ by letting
	$\lambda \in \mu_2$ act as $(x:y:z) \mapsto (x:y:\lambda z)$.
	Let $[\calY_{(p,q)}/\mu_2]$ and $[Y_{(p,q)}/\mu_2]$ be the quotient stacks over $\Ocal_K$ and $K$ respectively.
	
	\begin{proposition}\label{prop: genus equals one half}
		The quotient stack $[\calY_{(p,q)}/\mu_2]$ is a Deligne-Mumford stack over $\Ocal_K[1/2].$ The quotient stack  $[Y_{(p,q)}/\mu_2]$ is a genus-$1/2$ stacky curve.
	\end{proposition}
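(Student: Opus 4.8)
The plan is to treat the two assertions in sequence: first the Deligne--Mumford property integrally over $\Ocal_K[1/2]$, then the genus computation on the generic fibre over $K$. For the first claim, Lemma \ref{lemma: two finite group isom} shows that $\mu_2$ becomes isomorphic to the constant group scheme $\ZZ/2\ZZ$ after base change to $\Ocal_K[1/2]$; in particular $\mu_2$ is finite étale over $\Ocal_K[1/2]$. I would then invoke the standard criterion (cf.\ \cite{Ol16}) that the quotient of a separated finite-type scheme by a finite étale group scheme is Deligne--Mumford: every stabilizer group scheme is a closed subscheme of the étale group scheme $\mu_2\times_{\Ocal_K[1/2]}\calY_{(p,q)}$ over $\calY_{(p,q)}$, hence unramified, so the inertia (equivalently the diagonal) is unramified. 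Applying this to the $\mu_2$-action on $\calY_{(p,q)}$ gives that $[\calY_{(p,q)}/\mu_2]$ is Deligne--Mumford over $\Ocal_K[1/2]$.

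Over $K$, where $2$ is invertible since $\Char K=0$, the same argument shows $[Y_{(p,q)}/\mu_2]$ is a Deligne--Mumford stack, and I would check the remaining conditions in the definition of a stacky curve. As $p,q$ are nonzero the quadratic form $z^2-px^2-qy^2$ is nondegenerate, so $Y_{(p,q)}$ is a smooth, proper, geometrically connected conic, geometrically isomorphic to $\PP^1$; these properties descend through the quotient by the finite étale $\mu_2$, yielding a smooth, proper, geometrically connected $1$-dimensional stack. The nontrivial element of $\mu_2$ acts by the involution $(x:y:z)\mapsto(x:y:-z)$; away from its fixed locus the action is free, so the quotient stack restricts there to the quotient scheme, providing the required open substack isomorphic to a scheme.

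For the genus I would use formula \eqref{eq:genus2}. The coarse space is $Y_{(p,q)}/\mu_2$; being the quotient of a smooth curve by a finite group of order prime to the characteristic it is normal, hence a smooth curve, and geometrically it is $\PP^1/(\ZZ/2\ZZ)\cong\PP^1$, so $g(X_{\rm coarse})=0$. The stacky locus $\calP$ is the image of the fixed locus: on the conic the conditions $z=0$ and $z^2=px^2+qy^2$ force $px^2+qy^2=0$, which over $\overline{K}$ has exactly two solutions $(x:y:0)$ since $-q/p\neq 0$, and at each of these the whole of $\mu_2$ stabilizes, so $\deg G_{\overline P}=2$. Substituting into \eqref{eq:genus2},
\[
g(X)=0+\frac{1}{2}\cdot 2\cdot\Bigl(1-\frac{1}{2}\Bigr)=\frac{1}{2}.
\]

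The step I expect to be the main obstacle is the careful analysis of the fixed locus: verifying that the only fixed geometric points on the conic are the two points with $z=0$ (in particular that the fixed point $(0:0:1)$ of the involution on $\PP^2$ does not lie on $\calY_{(p,q)}$), and that each carries the full $\mu_2$ as stabilizer so that $\deg G_{\overline P}=2$. Everything else reduces to citing descent of smoothness and properness through the finite étale quotient and the quotient-by-étale-group criterion for the Deligne--Mumford property.
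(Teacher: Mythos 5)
Your proof is correct and takes essentially the same route as the paper: use Lemma \ref{lemma: two finite group isom} to get the Deligne--Mumford property over $\Ocal_K[1/2]$, descend smoothness, properness and geometric connectedness from the conic, identify the stacky locus as the two geometric points with $z=0$ (each with full stabilizer $\mu_2$), and apply the genus formula \eqref{eq:genus2} to get $g=1/2$. The only cosmetic differences are that the paper deduces the DM property by passing to the quotient $[\calY_{(p,q)}/(\ZZ/2\ZZ)]$ rather than verifying unramified inertia directly, and it identifies the coarse space as genus $0$ via the Kummer-sequence description of the free locus as $\GG_m$ rather than as $\PP^1/(\ZZ/2\ZZ)\cong\PP^1$; your explicit check that $(0:0:1)$ does not lie on the conic is a welcome detail the paper leaves implicit.
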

	\begin{proof}
		Since $[\calY_{(p,q)}/(\ZZ/2\ZZ)]$ is a Deligne-Mumford stack over $\Ocal_K,$ the first argument follows from Lemma \ref{lemma: two finite group isom}. In particular, the quotient stack  $[Y_{(p,q)}/\mu_2]$ is a Deligne-Mumford stack. For a Deligne-Mumford stack, the properties of being smooth, proper and geometrically
		connected of dimension one follow from these properties of $Y_{(p,q)}.$ Let $\calP_{z=0}\subset Y_{(p,q)}$ be the finite $K$-subscheme defined by $z=0.$ The group $\mu_2$ acts freely on $\Proj K[x,y,z]/(z^2-px^2-qy^2)\backslash \calP_{z=0},$ so the stack $(\Proj K[x,y,z]/(z^2-px^2-qy^2)\backslash \calP_{z=0})/\mu_2$ is representable by a scheme, which is an open substack of $[Y_{(p,q)}/\mu_2].$ For $\Proj K[x,y,z]/(z^2-px^2-qy^2)\backslash \calP_{z=0}$ is geometrically isomorphic to $\GG_m,$  geometrically this action over it can be viewed as the action from the Kummer sequence $1\to \mu_2\to \GG_m\to \GG_m\to 1,$ hence the stack $(\Proj K[x,y,z]/(z^2-px^2-qy^2)\backslash \calP_{z=0})/\mu_2$ is geometrically isomorphism to $\GG_m.$
		So  $[Y_{(p,q)}/\mu_2]$ is a stacky curve and $g([Y_{(p,q)}/\mu_2]_{\rm coarse})=0.$ For $\mu_2$ acts trivially on $\calP_{z=0}$ containing two geometrically point, by the genus formula (\ref{eq:genus2}), we have $g([Y_{(p,q)}/\mu_2]) =1/2.$
	\end{proof}

	The stacky curves that we consider in this paper, are the quotient stacks of form $[Y_{(p,q)}/\mu_2].$ And we denote  $[\calY_{(p,q)}/\mu_2]$ by $\calX_{(p,q)}.$
	
	\section{Existence of local points}\label{section: existence of local points}
	
	In this section, we prove that the stacky curve $\calX_{(p,q)}$ has local integral points, i.e. the set $\calX_{(p,q)}(\Ocal_v)\neq\emptyset$ for all $v\in \Omega_K.$
	
	\begin{lemma}\label{lemma:local points}
		Given a number field $K,$ let $p,~q$ be two coprime integers in $K.$ Let $S=\infty_K^r\cup\{v\in \Omega_K^f | v(2pq)\neq 0\}$ be a finite set.  Then the set  $\calY_{(p,q)}(\Ocal_v)\neq\emptyset$ for all $v\in \Omega_K\backslash S.$
	\end{lemma}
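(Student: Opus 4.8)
The plan is to partition $\Omega_K\setminus S$ into the complex archimedean places and the finite places $v$ with $v(2pq)=0$, and to treat each class separately. For a complex place $v$ one has $\Ocal_v=K_v=\CC$; since $\calY_{(p,q)}$ is a nonempty projective scheme over $\Ocal_K$, its base change to the algebraically closed field $\CC$ has a $\CC$-point (for instance $(1:0:\sqrt{p})$ when $p\neq 0$), so $\calY_{(p,q)}(\Ocal_v)\neq\emptyset$ with nothing further to check.

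The content lies in the finite places, where the strategy is good reduction followed by Hensel lifting. Fix a finite $v$ with $v(2pq)=0$. Since $2,p,q\in\Ocal_K$ and $v(2)+v(p)+v(q)=v(2pq)=0$ with each summand nonnegative, all three of $2,p,q$ are units in $\Ocal_v$. First I would verify that $\calY_{(p,q)}$ is smooth over $\Ocal_v$. Writing it as the hypersurface $\{z^2-px^2-qy^2=0\}$ in $\PP^2_{\Ocal_v}$, I would invoke the relative Jacobian criterion: the partial derivatives $-2px,\,-2qy,\,2z$ generate the irrelevant ideal $(x,y,z)$ because $2,p,q\in\Ocal_v^{\times}$, so the relative singular locus is empty and $\calY_{(p,q)}$ is smooth over $\Ocal_v$; in particular its closed fiber is a smooth conic over $\FF_v$.

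It then remains to exhibit and lift a point of the closed fiber. A smooth conic over the finite field $\FF_v$ always has an $\FF_v$-rational point---by the Chevalley--Warning theorem applied to the ternary quadratic form $z^2-px^2-qy^2$, or equivalently because such a conic is isomorphic to $\PP^1_{\FF_v}$---and this point is automatically smooth on $\calY_{(p,q)}$. Since $\calY_{(p,q)}$ is smooth over the complete, hence henselian, discrete valuation ring $\Ocal_v$, Hensel's lemma (the infinitesimal lifting property for smooth morphisms) lifts this $\FF_v$-point to an $\Ocal_v$-point, giving $\calY_{(p,q)}(\Ocal_v)\neq\emptyset$.

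I expect the only real subtlety to be the good-reduction step, and in particular the role of the hypothesis that $2$ is a unit at $v$: in residue characteristic $2$ the Jacobian criterion degenerates, as the partials $-2px,\,-2qy,\,2z$ all vanish on the closed fiber, and the conic $z^2=px^2+qy^2$ there can genuinely become singular (over a finite field of characteristic $2$ it degenerates to a double line). This is exactly why the places dividing $2$ are absorbed into $S$. Once smoothness over $\Ocal_v$ is secured, the existence of an $\FF_v$-point and its Hensel lift are entirely standard.
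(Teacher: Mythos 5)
Your proof is correct and follows essentially the same route as the paper: Chevalley--Warning to get an $\FF_v$-point of the closed fiber, smoothness of $\calY_{(p,q)}$ over $\Ocal_v$ away from $2pq$, and Hensel lifting to an $\Ocal_v$-point. The extra details you supply---the explicit Jacobian-criterion check of smoothness, the trivial treatment of complex places (which the paper leaves implicit), and the remark on why residue characteristic $2$ must be excluded---are all accurate elaborations of the paper's argument rather than a different approach.
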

	\begin{proof}
		For any  finite place $v\in \Omega_K,$ by Chevalley-Warning theorem (cf. \cite[Chapter I \S 2, Corollary 2]{Se73}), the set $\calY_{(p,q)}(\FF_v)\neq \emptyset.$ For any $v\in \Omega_K\backslash S,$ the scheme $\calY_{(p,q)}$ is smooth over $\Ocal_v.$  By the smooth lifting theorem, the set  $\calY_{(p,q)}(\Ocal_v)\neq\emptyset$ for all $v\in \Omega_K\backslash S.$
	\end{proof}
	\begin{remark}
		Consider the quotient morphism:  $\calY_{(p,q)}\to \calX_{(p,q)}.$ Then this lemma implies that the set $\calX_{(p,q)}(\Ocal_v)\neq\emptyset$ for all $v\in \Omega_K\backslash S.$
	\end{remark}
	In order to prove that the stacky curves $\calX_{(p,q)}$ has local integral points. We need to check that the set $\calX_{(p,q)}(\Ocal_v)\neq\emptyset$ for all $v\in S.$
	
	Let $\Ocal_K$-algebra $R$ be a principal ideal domain. We analysis the set $\calX_{(p,q)}(R)$ first.
	
	By definition of the quotient stack,
	a morphism $\Spec R \to \calX_{(p,q)}$
	is given by a $\mu_2$-torsor $T$ over $R$ equipped with a $\mu_2$-equivariant
	morphism $T \to \calY_{(p,q)}$.
	The torsors are classified by
	$\HH^1_{\textup{fppf}}(R,\mu_2)$,
	which is isomorphic to $R^\times/R^{\times 2}$,
	since $\HH^1_{\textup{fppf}}(R,\GG_m) = \Pic R = 0$.
	Explicitly, if $t \in R^\times$,
	the corresponding $\mu_2$-torsor is $T_t \colonequals \Spec R[u]/(u^2-t)$ and the $\mu_2$-action on $T_t$ is given by letting
	$\lambda \in \mu_2$ act as $u \mapsto \lambda u.$
	Let $\calY_{(p,q)t} \colonequals \Proj R[x,y,z']/(tz'^2-px^2-qy^2)$ be the twist of $\calY_{(p,q)}$ by $t.$ Consider the $\mu_2$-torsor $\calY_{(p,q)t}\times T_t$ over $\calY_{(p,q)t}.$ Define a morphism $\calY_{(p,q)t}\times T_t\to \calY_{(p,q)}$ given by
	\begin{align*}
		\Ocal_K [x,y,z]/(z^2-px^2-qy^2)& \to  R[x,y,z',u]/(tz'^2-px^2-qy^2, u^2-t)\\
		(x,y,z) & \mapsto (x,y,uz') .
	\end{align*}
	It is a  $\mu_2$-equivariant morphism. This gives a morphism
	$\pi_t \colon \calY_{(p,q)t} \to \calX_{(p,q)}$.
	To give a $\mu_2$-equivariant morphism $T_t \to \calY_{(p,q)}$
	is the same as giving a tripe $(a_1,a_2,a_3)\in R^3,$ and the $\mu_2$-equivariant morphism is given by
	\begin{align*}
		\Ocal_K [x,y,z]/(z^2-px^2-qy^2)& \to  R[u]/( u^2-t)\\
		(x,y,z) & \mapsto (a_1,a_2,a_3u) .
	\end{align*}
	And the tripe  $(a_1,a_2,a_3)$ gives a morphism $\Spec R \to \calY_{(p,q)t}$ defined by
	\begin{align*}
		R[x,y,z']/(tz'^2-px^2-qy^2)& \to R \\
		(x,y,z') & \mapsto (a_1,a_2,a_3) .
	\end{align*}
	Hence, to give a $\mu_2$-equivariant morphism $T_t \to \calY_{(p,q)}$
	is the same as giving a morphism $\Spec R \to \calY_{(p,q)t}.$
	Thus we obtain
	\begin{equation}\label{eq: local points decomposition}
		\calX_{(p,q)}(R) = \coprod_{t \in R^{\times}/R^{\times 2}} \pi_t(\calY_{(p,q)t}(R)).
	\end{equation}

	With this preparation, we have the following proposition.
	
	\begin{proposition}\label{prop: local points}
		Given a number field $K,$ let $p,~q$ be two coprime integers in $K.$  Then the set  $\calX_{(p,q)}(\Ocal_v)\neq\emptyset$ for all $v\in \Omega_K.$
	\end{proposition}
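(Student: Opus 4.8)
The plan is to combine the decomposition \eqref{eq: local points decomposition} with the observation recorded in the remark after Lemma \ref{lemma:local points}, which already gives $\calX_{(p,q)}(\Ocal_v)\neq\emptyset$ for every $v\in\Omega_K\backslash S$. Thus it remains to produce an integral point at each $v\in S$, that is, at the real places and at the finitely many finite places dividing $2pq$. For such a $v$ the ring $\Ocal_v$ is either a discrete valuation ring or a field, hence a principal ideal domain, so the decomposition \eqref{eq: local points decomposition} applies: to show $\calX_{(p,q)}(\Ocal_v)\neq\emptyset$ it suffices to exhibit a single class $t\in\Ocal_v^\times/\Ocal_v^{\times 2}$ for which the twisted conic $\calY_{(p,q)t}=\Proj \Ocal_v[x,y,z']/(tz'^2-px^2-qy^2)$ has an $\Ocal_v$-point, since then $\pi_t$ carries it into $\calX_{(p,q)}(\Ocal_v)$.

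The key point is that we are free to choose the twist $t$, and the coprimality of $p$ and $q$ guarantees a convenient choice at every place. Indeed, since $(p,q)=\Ocal_K$, at each finite place at least one of $p,q$ is a unit in $\Ocal_v$, while at an archimedean place both are units of $\Ocal_v=K_v$ because they are nonzero. I would therefore argue uniformly. If $p\in\Ocal_v^\times$, take $t=p$ and the point $(x:y:z')=(1:0:1)$, which satisfies $p\cdot 1-p\cdot 1-q\cdot 0=0$. If instead $p\notin\Ocal_v^\times$, then $v$ is a finite place dividing $p$, so $q\in\Ocal_v^\times$, and I take $t=q$ together with the point $(0:1:1)$, which satisfies $q\cdot 1-p\cdot 0-q\cdot 1=0$. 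In both cases the chosen coordinates generate the unit ideal, so they define an $\Ocal_v$-point of $\PP^2_{\Ocal_v}$ lying on the twisted conic, and the chosen $t$ is a unit as required.

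The step I would expect to be most delicate a priori is the behaviour at the dyadic places $v\mid 2$, where solving $tz'^2-px^2-qy^2=0$ by a Hensel-type lifting is awkward because of the residue characteristic; the explicit points above avoid this difficulty entirely, since they are defined over $\Ocal_v$ with no lifting. I would also double-check that the decomposition \eqref{eq: local points decomposition} remains valid at $v\mid 2$, which it does: its derivation rests only on $\Pic\Ocal_v=0$ and the fppf Kummer-sequence identification $\HH^1_{\textup{fppf}}(\Ocal_v,\mu_2)\cong\Ocal_v^\times/\Ocal_v^{\times 2}$, neither of which requires $2$ to be invertible. Since the construction in fact works at every place, it even reproves the cases handled by the remark, and we conclude that $\calX_{(p,q)}(\Ocal_v)\neq\emptyset$ for all $v\in\Omega_K$.
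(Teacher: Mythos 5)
Your proof is correct and takes essentially the same approach as the paper: after reducing to the places in $S$ via Lemma \ref{lemma:local points}, both arguments apply the decomposition \eqref{eq: local points decomposition} over the principal ideal domain $\Ocal_v$ and exhibit an explicit point on a twisted conic $\calY_{(p,q)t}$ with $t=p$ or $t=q$, namely $(1:0:1)$ or $(0:1:1)$, chosen according to which of $p,q$ is a unit at $v$. The only difference is organizational: the paper splits cases by $v\nmid q$ (or $v$ real) versus $v\nmid p$, while you split by whether $p\in\Ocal_v^\times$; the underlying twists and points coincide.
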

	\begin{proof}
		By Lemma \ref{lemma:local points}, we need to check that the set $\calX_{(p,q)}(\Ocal_v)\neq\emptyset$ for all $v\in S.$
		
		Suppose that $v\in \infty_K^r$ or  $v \nmid q.$ Then $q\in \Ocal_{v}.$ Since $qz^2-px^2-qy^2=0$ has a nontrivial solution $(x:y:z)=(0:1:1),$ we have $\calY_{(p,q)q}(\Ocal_v) \ne \emptyset.$ Hence, the set $\calX_{(p,q)}(\Ocal_v)\neq\emptyset.$
		
		Similarly, suppose that $v \nmid p,$ the sets $\calY_{(p,q)p}(\Ocal_v) \ne \emptyset$ and $\calX_{(p,q)}(\Ocal_v)\neq\emptyset.$
		
		Since $p,~q$ are two coprime integers,
		the set $\calX_{(p,q)}(\Ocal_v)\neq\emptyset$ for all $v\in \Omega_K.$
	\end{proof}
	
	\section{Genus $1/2$-stacky curves violating the local-global principle for integral points}\label{section: main theorem}
	
	Given a number field $K,$ we put some restrictions on the choice of integers $p,q$ 
	so that the stacky curve $\calX_{(p,q)}$ has no integral points, i.e. the set $\calX_{(p,q)}(\Ocal_K)=\emptyset.$ We choose $p,q$ in the following way.

	\subsection{Choosing prime elements}\label{subsection choosing elements}
	Given a number field $K,$ since the ideal class group of $K$ is finite, we take a positive integer $N$ such that $\Ocal_K[1/N]$ is a principal ideal domain. By Dirichlet's unit theorem, the group $\Ocal_K[1/N]^\times$ is a finitely generated abelian group. We assume that it is generated by $\{a_i\}$ for $i=1,\cdots,n.$
	By \v{C}ebotarev's density
	theorem and global class field theory applied to a ray class field, we can find a pair of two different odd prime elements $(p,q)$ such that
	\begin{enumerate}
		%    		\item\label{assumption 1} $\tau_v(p)>0$ for all $v\in \infty_K^r,$
		%    		\item\label{assumption 2} $p\equiv 1 \mod 8,$
		\item\label{assumption 3} $a_i\in K_p^{\times 2}$  for all $i=1,\cdots,n,$
		\item\label{assumption 4}  $q\notin K_p^{\times 2}.$
	\end{enumerate}
	We refer to \cite{Wu21} and \cite{Wu22} for more details. Then we have the following theorem.

	\begin{theorem}\label{thm: main theorem}
		Let $K$ be a number field. Let a positive integer $N$ and  a pair of two different odd prime elements  $(p,q)$ be  chosen as in Subsection \ref{subsection choosing elements}. Let $\calX_{(p,q)}$ be the stacky curve defined in Section \ref{section: class of stacky curves}.
		Then $\calX_{(p,q)}$ is a stacky curve of genus-$1/2$ over $\Ocal_K$ violating the local-global principle for integral points.
	\end{theorem}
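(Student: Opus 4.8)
The plan is to combine the two propositions already established with a single local obstruction at the prime $p$. By Proposition \ref{prop: genus equals one half} the stack $\calX_{(p,q)}$ is a genus-$1/2$ stacky curve over $\Ocal_K$, and by Proposition \ref{prop: local points} we have $\calX_{(p,q)}(\Ocal_v)\neq\emptyset$ for every $v\in\Omega_K$. Everything therefore reduces to proving that $\calX_{(p,q)}(\Ocal_K)=\emptyset$. Since the prime elements $p,q$ produced in Subsection \ref{subsection choosing elements} may be taken coprime to $N$ (Chebotarev gives infinitely many admissible primes, so we avoid the finitely many dividing $N$), the ring map $\Ocal_K\to\Ocal_K[1/N]$ induces $\calX_{(p,q)}(\Ocal_K)\to\calX_{(p,q)}(\Ocal_K[1/N])$, and it suffices to prove the stronger emptiness statement $\calX_{(p,q)}(\Ocal_K[1/N])=\emptyset$. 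The point of passing to $R\colonequals\Ocal_K[1/N]$ is that it is a principal ideal domain, so the decomposition \eqref{eq: local points decomposition} applies:
\[
  \calX_{(p,q)}(R)=\coprod_{t\in R^{\times}/R^{\times 2}}\pi_t\bigl(\calY_{(p,q)t}(R)\bigr).
\]
Hence I would reduce the whole theorem to showing that $\calY_{(p,q)t}(R)=\emptyset$ for \emph{every} unit $t$, i.e.\ that $tz'^2=px^2+qy^2$ admits no triple $(x,y,z')\in R^3$ generating the unit ideal.

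Fix such a $t$. The group $R^{\times}=\Ocal_K[1/N]^{\times}$ is generated by $\{a_i\}$, and assumption \eqref{assumption 3} says each $a_i\in K_p^{\times 2}$; as the squares form a subgroup, every $t\in R^{\times}$ lies in $K_p^{\times 2}$. Suppose for contradiction that $\calY_{(p,q)t}(R)\neq\emptyset$ and base change a point along the inclusion $R\hookrightarrow K_p$ (legitimate because $p\nmid N$). Since the coordinates generate the unit ideal in $R$ and $R$ embeds in $K_p$, their images are not all zero, so we obtain a nontrivial $K_p$-solution of $tz'^2=px^2+qy^2$. Writing $t=s^2$ with $s\in K_p^{\times}$ and replacing $z'$ by $sz'$ converts this into a nontrivial $K_p$-solution of $z^2=px^2+qy^2$; equivalently, the ternary form $\langle 1,-p,-q\rangle$ is isotropic over $K_p$.

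The crux is then the Hilbert-symbol computation at $p$. Isotropy of $\langle 1,-p,-q\rangle$ over $K_p$ is equivalent to $(p,q)_{K_p}=1$. Here $p$ is a uniformizer at $v_p$, while $q$, being a distinct prime element coprime to $p$, is a $v_p$-adic unit, and the residue characteristic is odd because $p$ is an odd prime element. In this tame situation $(p,q)_{K_p}=1$ holds if and only if $q\in K_p^{\times 2}$. But assumption \eqref{assumption 4} states exactly that $q\notin K_p^{\times 2}$, so $(p,q)_{K_p}=-1$ and the form is anisotropic over $K_p$, contradicting the nontrivial $K_p$-solution just produced. Therefore $\calY_{(p,q)t}(R)=\emptyset$ for all $t$, the decomposition forces $\calX_{(p,q)}(R)=\emptyset$, and a fortiori $\calX_{(p,q)}(\Ocal_K)=\emptyset$; together with the genus and local-point statements this yields the theorem.

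The step I expect to be the main obstacle is making the local analysis at $p$ both correct and \emph{uniform} across the decomposition: one must verify that every twisting class $t$ genuinely becomes trivial in $K_p^{\times}/K_p^{\times 2}$ (this is where the full generating set in assumption \eqref{assumption 3} is used, not just a single element), that the base-changed triple stays nontrivial, and that the tame formula $(p,q)_{K_p}=-1$ is applied in the right direction. The remaining points—namely that conditions \eqref{assumption 3} and \eqref{assumption 4} can be realized simultaneously by the Chebotarev/ray-class-field argument cited in Subsection \ref{subsection choosing elements}, and the bookkeeping for $\Proj$-points over a PID—are routine by comparison.
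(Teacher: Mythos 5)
Your proposal is correct and takes essentially the same route as the paper: reduce to the principal ideal domain $\Ocal_K[1/N]$, apply the decomposition (\ref{eq: local points decomposition}), use condition (\ref{assumption 3}) to conclude that every twist $\calY_{(p,q)t}$ with $t\in\Ocal_K[1/N]^{\times}$ becomes isomorphic to $\calY_{(p,q)}$ over $K_p$, and use condition (\ref{assumption 4}) to rule out nontrivial $K_p$-points. The only differences are cosmetic: you spell out the tame Hilbert-symbol computation showing $\calY_{(p,q)}(K_p)=\emptyset$, which the paper leaves implicit, and your aside about choosing $p,q$ coprime to $N$ is unnecessary, since the inclusion $\Ocal_K[1/N]\subset K\subset K_p$ holds regardless (and condition (\ref{assumption 3}) already forces $p\nmid N$, as a uniformizer cannot be a square in $K_p$).
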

	
	\begin{proof}
		By Proposition \ref{prop: genus equals one half}, the genus of $\calX_{(p,q)}$ is $1/2.$ By Proposition \ref{prop: local points}, the set $\calX_{(p,q)}(\Ocal_v) \ne \emptyset$ for any $v\in \Omega_K.$
		
		Next, we prove that the set  $\calX_{(p,q)}(\Ocal_K[1/N])=\emptyset.$
		For the ring $\Ocal_K[1/N]$ is a principal ideal domain,  in order to prove that $\calX_{(p,q)}(\Ocal_K[1/N]) = \emptyset,$ by the equality of sets (\ref{eq: local points decomposition}), it will be sufficient to prove that for any $t\in \Ocal_K[1/N]^{\times},$ the set $\calY_{(p,q)t}(\Ocal_K[1/N])=\emptyset.$  For $\Ocal_K[1/N]^{\times}$ is generated by $\{a_i\}$ for $i=1,\cdots,n,$ and by the chosen condition of Subsection \ref{subsection choosing elements} that $a_i\in K_p^{\times 2},$ we have $ \calY_{(p,q)t}$ is isomorphic to $\calY_{(p,q)}$ over $K_p$ for all  $t\in \Ocal_K[1/N]^{\times}.$
		By the choice of elements $q,$ the set $\calY_{(p,q)}(K_p)=\emptyset.$ So $\calX_{(p,q)}(\Ocal_K[1/N]) = \emptyset,$ which implies that $\calX_{(p,q)}(\Ocal_K)=\emptyset.$
		
		So the stacky curve $\calX_{(p,q)}$ is  of genus-$1/2$ and violating the local-global principle for integral points.
	\end{proof}
	
	\begin{remark}
		This theorem implies that
		the chosen stacky curve $\calX_{(p,q)}$ violates strong approximation in the sense of \cite{Ch20}.
	\end{remark}

	%	\begin{thm}
		%		For any number field $K,$  there exists a stacky curve of genus $1/2$ violating local-global principle for integral points.
		%\end{thm}

		\begin{footnotesize}
			\noindent\textbf{Acknowledgements.} The authors would like to thank D.S. Wei and W.Z. Zheng for many fruitful discussions. 
		\end{footnotesize}

		% \bib, bibdiv, biblist are defined by the amsrefs package.
		\begin{bibdiv}
			\begin{biblist}
				
				\bib{BP20}{article}{
					author={Bhargava, M.},
					author={Poonen, B.},
					title={The local-global principle for integral points on stacky curves},
					date={2020},
					journal={Preprint, arXiv:2006.00167 [math.NT]},
				}
				
				\bib{Ch20}{article}{
					author={Christensen, A.},
					title={A topology on points on stacks},
					date={2020-04-28},
					pages={55},
					note={Ph.D.\ thesis, Massachusetts Institute of Technology},
				}
				
				\bib{DG95}{article}{
					author={Darmo, H.},
					author={Granville, A.},
					title={On the equations $z\sp m=f(x,y)$ and $ax\sp p+by\sp q=cz\sp r$},
					date={1995},
					journal={Bull. London Math. Soc.},
					volume={27},
					number={6},
					pages={513\ndash 543},
				}
				
				\bib{KM97}{article}{
					author={Keel, S.},
					author={Mori, S.},
					title={Quotients by groupoids},
					date={1997},
					journal={Ann. of Math. (2)},
					volume={145},
					number={1},
					pages={193\ndash 213},
				}
				
				\bib{Ol16}{book}{
					author={Olsson, M.},
					title={Algebraic spaces and stacks},
					series={American Mathematical Society Colloquium Publications},
					publisher={American Mathematical Society, Providence, RI},
					volume={62},
				}
				
				\bib{PV10}{article}{
					author={Poonen, B.},
					author={Voloch, J.},
					title={The {B}rauer-{M}anin obstruction for subvarieties of abelian
						varieties over function fields},
					date={2010},
					journal={Ann. of Math.},
					volume={171},
					pages={511\ndash 532},
				}
				
				\bib{Se73}{book}{
					author={Serre, J.-P.},
					title={A course in arithmetic},
					series={Graduate Texts in Mathematics},
					publisher={Springer-Verlag},
					date={1973},
					volume={7},
				}
				
				\bib{VZB19}{article}{
					author={Voight, J.},
					author={Zureick-Brown, D.},
					title={The canonical ring of a stacky curve},
					date={2019},
					note={Preprint, \texttt {arXiv:1501.04657v3}, to appear in Mem.\ Amer.\
						Math.\ Soc.},
				}
				
				\bib{Wu21}{article}{
					author={Wu, H.},
					title={Non-invariance of the {B}rauer-{M}anin obstruction for surfaces},
					date={2021},
					journal={Preprint, arXiv:2103.01784v2 [math.NT]},
				}
				
				\bib{Wu22}{article}{
					author={Wu, H.},
					title={On genus one curves violating the local-global principle},
					date={2022},
					journal={Preprint, arXiv:2112.02470v2 [math.NT]},
				}
				
			\end{biblist}
		\end{bibdiv}
		
		\end{document}